\numberwithin{equation}{section}
\theoremstyle{plain}
\newtheorem{cor}[equation]{Corollary}
\newtheorem{corollary}[equation]{Corollary}
\newtheorem{theorem}[equation]{Theorem}
\newtheorem{lemma}[equation]{Lemma}
\theoremstyle{definition}
\newtheorem{remark}[equation]{Remark}
\newtheorem{nonsec}[equation]{}
\newcounter{alphabet}
\newcommand{\be}{\begin{eqnarray}}
\newcommand{\ee}{\end{eqnarray}}
\newcommand{\ba}{\begin{array}}
\newcommand{\ea}{\end{array}}
\newcommand{\ben}{\begin{eqnarray*}}
\newcommand{\een}{\end{eqnarray*}}
\newcommand{\card}{{\operatorname{card}\,}}
\newcommand{\R}{{\mathbb R}}
\newcommand{\B}{\mathbb{B}}
\newcommand{\capa}{\mathrm{cap}\,}
\newcommand{{\tth}}{\mathrm{th}}
\newcommand{\uhp}{{\mathbb H}}
\newcommand{\K}{\mathcal{K}}
\newcommand {\M} {\mathsf{M}}
\font\fFt=eusm10 %scaled 1200
\font\fFa=eusm7  %scaled 1200
\font\fFp=eusm5  %scaled 1200
\def\K{\mathchoice
 %%%displaystyle
{\hbox{\,\fFt K}}
%%%%textstyle
{\hbox{\,\fFt K}}
%%%scriptstyle
{\hbox{\,\fFa K}}
%%%%scriptscriptstyle
{\hbox{\,\fFp K}}}
\newcounter{minutes}\setcounter{minutes}{\time}
\newcounter{hours}\setcounter{hours}{\time}
\begin{document}

\bibliographystyle{amsplain}
\title%[]
{
Conformally invariant metrics and lack of H\"older continuity
} % Estimates of the modulus metric

\def\thefootnote{}
\footnotetext{
\texttt{\tiny File:~\jobname .tex,
          printed: \number\year-\number\month-\number\day,
          \thehours.\ifnum\theminutes<10{0}\fi\theminutes}}
\makeatletter\def\thefootnote{\@arabic\c@footnote}\makeatother

\author[R. Kargar]{Rahim Kargar$^*$}
\address{R. Kargar: Department of Mathematics and Statistics, University of Turku, Turku, Finland}
\email{rakarg@utu.fi\newline ORCID ID:  \url{http://orcid.org/0000-0003-1029-5386}}

\author[O. Rainio]{Oona Rainio}
\address{O. Rainio: Department of Mathematics and Statistics, University of Turku, Turku, Finland}
\email{ormrai@utu.fi\newline ORCID ID: \url{http://orcid.org/0000-0002-7775-7656} }
%\author[M. Vuorinen]{Matti Vuorinen}
%\address{M. Vuorinen: Department of Mathematics and Statistics, University of Turku, Turku, Finland}
%\email{vuorinen@utu.fi \newline ORCID ID:  \url{http://orcid.org/0000-0002-1734-8228}}

\keywords{Conformal modulus; Hyperbolic metric; Modulus metric; H\"{o}lder continuity.}
\subjclass[2020]{51M09; 30F45}
\begin{abstract}
The modulus metric between two points in a subdomain of $\mathbb{R}^n, n\ge 2,$ is defined in terms of moduli of curve families joining the boundary of the domain with a continuum connecting the two points. This metric is one of the conformally invariant hyperbolic type metrics that have become a standard tool in geometric function theory. We prove that the modulus metric is not H\"{o}lder continuous with respect to the hyperbolic metric.
\end{abstract}
\maketitle

%%%%%%%%%%%%%%%%%%%%%%%%%%%%%%%%%%%%%%%%%
%%%%%%%%%%%%%%%%%%%%%%%%%%%%%%%%%%%%%%%%%
%%%%%%%%%%%%%%%%%%%%%%%%%%%%%%%%%%%%%%%%%
\section{Introduction}
%%%%%%%%%%%%%%%%%%%%%%%%%%%%%%%%%%%%%%%%%
%%%%%%%%%%%%%%%%%%%%%%%%%%%%%%%%%%%%%%%%%
%%%%%%%%%%%%%%%%%%%%%%%%%%%%%%%%%%%%%%%%%

In the geometric function theory, one of the important subjects of study
is the modulus of a curve family in $\R^2$ introduced in 1950 by
L. Ahlfors and A. Beurling. %\cite{ab}.
Its definition was extended
to the Euclidean space $\R^n$ with dimension $n\geq2$ by
B. Fuglede, %\cite{fug},
after which it was quickly adopted as
a standard tool to study mappings during the early 1960s by
F.W. Gehring \cite{g05} and J. V\"ais\"al\"a \cite{vbook}.
This conformal invariant has numerous applications in the
current research, see \cite{bp, hkv,h, svz}.

Due to its invariance properties, the conformal modulus of a curve
family is often used to study the distortion of distances between
points under quasiconformal mappings. At times, it is enough to use
crude estimates for simple curve families combined with
majorization or minorization. However, the use of crude estimates
has two drawbacks: Firstly, it requires experience with the moduli
of curve families and,
what is more unfortunate, crude estimates lead to loss of information.

Instead of using crude estimates, it is often useful to reduce the
estimation problem to classical extremal problems and to use these
systematically.
We now discuss two classical extremal problems which have been
applied in this way.
Namely, these two problems have already been studied by
H. Gr\"otzsch and O. Teichm\"uller, who both were pioneers of
conformal invariant from 1920s and 1930s on.

Let $G$ be a domain in the extended real space $\overline{\R}^n=\mathbb{\R}^n\cup \{\infty\}$
such that $\card(\overline{\R}^n\backslash G)\geq2$. Our first
extremal problem is \cite[(10.2), p. 174]{hkv}
\begin{align}\label{mumetric}
\mu_G(x,y)=\inf_{C_{xy}}\M(\Delta(C_{xy},\partial G;G)),
\end{align}
where the infimum is taken over all continua $C_{xy}$ joining
the points $x$ and $y$ in $G$, $\Delta(E,F;G)$ is the family of all
such curves in $G$ that join $E$ and $F$, and $\M(\Gamma)$
denotes the conformal modulus of a curve family $\Gamma$ in $G$.
The second extremal problem is
\begin{align}\label{lambdametric}
\lambda_G(x,y)=\inf_{C_x,C_y}\M(\Delta(C_x,C_y;G)),
\end{align}
where infimum is taken over all pairs of continua $C_x$ and $C_y$ in $\overline{G}$
such that $x\in C_x$, $y\in C_y$,
$\overline{C_x}\cap\partial G\neq\varnothing$ and
$\overline{C_y}\cap\partial G\neq\varnothing$.

By Ahlfors \cite[p. 72]{ah}, the extremal problem \eqref{mumetric}
was studied with $n=2$ and $G=\B^2$ by H. Gr\"otzsch while the problem
\eqref{lambdametric} was considered with $n=2$ and $G=\R^2\backslash\{0\}$
by O. Teichm\"uller. For further notes on the literature, see
\cite[10.30, p. 186]{hkv}, where the contributions of I.S. G\'al,
T. Kuusalo and J. Lelong-Ferrand are cited. As stated in the following
two theorems \cite{hkv, vuo, vuo90}, the functions $\mu_G$ and $\lambda_G$
of the extremal problems \eqref{mumetric} and \eqref{lambdametric} can
also be used to define metrics, out of which the first one is called the
\emph{modulus metric}.

\begin{theorem}\label{thm_mulambdametrics}
(1) If $\capa(\partial G)>0$, then $\mu_G$ is a metric.\newline
(2) $\lambda_G^p$ is a metric if and only if $p\in[-1/(n-1),0)$.
\end{theorem}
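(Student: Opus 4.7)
The plan is to verify the metric axioms (non-negativity, symmetry, vanishing on the diagonal, positivity off the diagonal, and triangle inequality) in each part, using monotonicity and subadditivity of the modulus as the main workhorses.

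For part (1), symmetry of $\mu_G$ and non-negativity are immediate from the definition. I would obtain $\mu_G(x,x)=0$ by taking the degenerate continuum $C_{xy}=\{x\}$: in dimension $n\ge 2$ a single point has vanishing conformal capacity, so $\M(\Delta(\{x\},\partial G;G))=0$. Positivity for $x\ne y$ is where the assumption $\capa(\partial G)>0$ enters: any continuum joining two distinct points has positive diameter, so $C_{xy}$ and $\partial G$ form a non-degenerate ring whose modulus admits a positive lower bound via comparison with the Gr\"otzsch/Teichm\"uller extremal rings. Finally, the triangle inequality follows from the observation that $C_{xz}:=C_{xy}\cup C_{yz}$ is a continuum joining $x$ to $z$ with
\[
\Delta(C_{xz},\partial G;G)\subseteq\Delta(C_{xy},\partial G;G)\cup\Delta(C_{yz},\partial G;G),
\]
so subadditivity of $\M$ yields $\mu_G(x,z)\le \M(\Delta(C_{xy},\partial G;G))+\M(\Delta(C_{yz},\partial G;G))$, and taking infima over $C_{xy},C_{yz}$ independently gives $\mu_G(x,z)\le \mu_G(x,y)+\mu_G(y,z)$.

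For the ``only if'' direction of (2), I first examine the diagonal. Whenever $x=y$, every admissible pair $(C_x,C_y)$ satisfies $x\in C_x\cap C_y$, so $\Delta(C_x,C_y;G)$ contains a constant curve and $\M=\infty$; hence $\lambda_G(x,x)=\infty$ and $\lambda_G^p(x,x)$ equals $0$, $1$ or $\infty$ depending on the sign of $p$. This already forces $p<0$. For $p<-1/(n-1)$ I would rule out the triangle inequality by a concrete counterexample with three collinear points in a simple model domain (for instance $\R^n\setminus\{0\}$ or $\B^n$), where $\lambda_G$ reduces to the Teichm\"uller capacity function $\tau_n$ and the known asymptotics of $\tau_n$ near $0$ make the inequality fail once $p$ is too negative.

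The ``if'' direction of (2) reduces to the sharp modulus triangle inequality
\[
\lambda_G(x,z)^{-1/(n-1)}\le \lambda_G(x,y)^{-1/(n-1)}+\lambda_G(y,z)^{-1/(n-1)}.
\]
I would prove this by a patching argument: given near-extremal pairs $(C_x,C_y')$ and $(C_y'',C_z)$ realising $\lambda_G(x,y)$ and $\lambda_G(y,z)$ up to $\varepsilon$, glue $C_y'\cup C_y''$ into a single continuum through $y$ and invoke a symmetrization-type modulus inequality that is homogeneous of degree $1-n$ under scaling, which is exactly what produces the exponent $1/(n-1)$. Once this sharp case is in hand, the full range $p\in[-1/(n-1),0)$ follows by a power trick: writing $p=-s/(n-1)$ with $s\in(0,1]$ and using the elementary subadditivity $(a+b)^s\le a^s+b^s$ for $a,b\ge 0$, the triangle inequality transfers from the exponent $-1/(n-1)$ to $p$. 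The only genuinely deep step is this sharp exponent inequality; the rest of the argument (subadditivity, diagonal analysis, power trick, and collinear counterexamples for $p<-1/(n-1)$) is essentially formal.
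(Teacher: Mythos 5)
First, note that the paper does not actually prove this theorem: it is quoted from the literature \cite{hkv,vuo,vuo90}, with the remark that part (1) is straightforward while part (2) was posed as an open problem in \cite[p.~193]{vuobook1988} and solved independently by Ferrand, Jenkins and Solynin. Measured against that, your part (1) is essentially the standard argument and is correct in structure: symmetry, non-negativity, $\mu_G(x,x)=0$ via a degenerate continuum, and the triangle inequality via the continuum $C_{xy}\cup C_{yz}$ together with monotonicity and subadditivity of $\M$ (the inclusion $\Delta(C_{xy}\cup C_{yz},\partial G;G)\subseteq\Delta(C_{xy},\partial G;G)\cup\Delta(C_{yz},\partial G;G)$ is valid). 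One caveat on the positivity step: the comparison with Gr\"otzsch/Teichm\"uller rings requires \emph{both} plates of the condenser to be continua, whereas here $\partial G$ is only assumed to have positive capacity and may be totally disconnected (e.g.\ of Cantor type). The correct tool is the general fact that the modulus of the curve family joining two compact sets of positive capacity is positive, in a quantitative form that is uniform over all continua $C_{xy}$ containing the two fixed points $x\neq y$; this is repairable but not what you wrote.

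Part (2) contains a genuine gap at its only deep step. The necessity of $p<0$ (from $\lambda_G(x,x)=\infty$) and of $p\geq -1/(n-1)$ (three points on a hyperbolic geodesic in $\B^n$, using Theorem \ref{thm_muLamB}(2) and the asymptotics in \eqref{gamEst}, which give $\lambda_{\B^n}(x,y)^{1/(1-n)}\sim c\,\rho_{\B^n}(x,y)$ as $\rho\to\infty$, so that $\lambda^p\sim c\rho^{s}$ with $s>1$ violates the triangle inequality) can be made rigorous along your lines, and the power trick $(a+b)^s\leq a^s+b^s$ correctly reduces the whole range $p\in[-1/(n-1),0)$ to the endpoint. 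But the endpoint inequality $\lambda_G(x,z)^{1/(1-n)}\leq\lambda_G(x,y)^{1/(1-n)}+\lambda_G(y,z)^{1/(1-n)}$ \emph{is} the theorem: it resisted proof for years and the known arguments use polarization/symmetrization (Solynin), module techniques for quadratic differentials in the plane (Jenkins), or Ferrand's capacity machinery. Your ``patching argument'' --- glue $C_y'\cup C_y''$ through $y$ and ``invoke a symmetrization-type modulus inequality that is homogeneous of degree $1-n$'' --- names the desired conclusion rather than deriving it: after gluing, no standard monotonicity, subadditivity or homogeneity property of $\M$ relates $\M(\Delta(C_x,C_z;G))$ to the two moduli through the intermediate continuum with the exponent $1/(1-n)$. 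As written, the ``if'' direction of part (2) is not proved; everything else in your outline is formal bookkeeping around this missing core.
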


The proof of Theorem \ref{thm_mulambdametrics}(1) is straightforward, whereas part (2) has an interesting history. It was proved with
$p=-1/n$ by Ferrand \cite{lf} and in the special case $G=\B^2$ by
G.D. Anderson, M.K. Vamanamurthy and M. Vuorinen \cite{avv}.
As an open problem, part (2) was formulated in \cite[p. 193]{vuobook1988}
and J. Ferrand, J.A. Jenkins, and A.Y. Solynin found solutions independently; see \cite[p. 453]{hkv}.

Theorem \ref{thm_mulambdametrics} has numerous applications, based
on the next two theorems.

\begin{theorem}\label{thm_muLamB}
For $D\in\{\uhp^n,\B^n\}$ and all distinct points $x,y\in D$,
\begin{enumerate}
  \item $\mu_D(x,y)=\gamma_n(1/{\rm th}(\rho_D(x,y)/2))$;
  \item $\lambda_D(x,y)=2^{-n}\gamma_n({\rm ch}(\rho_D(x,y)/2)),$
\end{enumerate}
where $\rho$ stands for the hyperbolic metric and $\gamma_n$
is a special function called the Gr\"otzsch capacity.
\end{theorem}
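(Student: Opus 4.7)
\bigskip
\noindent\textbf{Proof plan.} The idea in both parts is to combine the conformal invariance of the modulus with the fact that M\"obius self-maps of $\mathbb{B}^n$ (and of $\mathbb{H}^n$) are hyperbolic isometries, so that both $\mu_D(x,y)$ and $\lambda_D(x,y)$ depend only on $\rho_D(x,y)$. First I would reduce the case $D=\mathbb{H}^n$ to $D=\mathbb{B}^n$ via any M\"obius map $\mathbb{H}^n\to\mathbb{B}^n$. Then I would normalize the pair $(x,y)$ by an isometry of the ball to the canonical position $x=0$, $y=te_1$, where $t=\tanh(\rho_{\mathbb{B}^n}(x,y)/2)$ is the explicit formula for $|y|$ corresponding to the prescribed hyperbolic distance.

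For part (1), the remaining task is to identify
\[
\mu_{\mathbb{B}^n}(0,te_1)=\mathsf{M}(\Delta([0,te_1],\partial\mathbb{B}^n;\mathbb{B}^n)),
\]
i.e.\ to show that the Euclidean segment is the extremal continuum. This is the content of Gr\"otzsch's extremal problem, and it rests on Gehring's symmetrization principle; the result is recorded in \cite{hkv,vuo,vuo90}. Once this is in hand, the scaling $z\mapsto z/t$ maps the configuration to $([0,e_1],\partial B^n(0,1/t))$ inside $B^n(0,1/t)$, and the resulting modulus is by the very definition of the Gr\"otzsch capacity equal to $\gamma_n(1/t)$. Substituting $1/t=1/\tanh(\rho_D(x,y)/2)$ yields the formula in (1).

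For part (2), I would instead normalize $x$ and $y$ symmetrically, placing them at $\pm re_1$ with $r$ determined by $\rho_{\mathbb{B}^n}(x,y)$. Teichm\"uller's extremal problem then identifies the minimizing pair $(C_x,C_y)$ as the two antipodal diameter segments $[-e_1,-re_1]$ and $[re_1,e_1]$ reaching $\partial\mathbb{B}^n$. Sending one of the common boundary endpoints to $\infty$ by a further M\"obius transformation reduces the computation to the Teichm\"uller ring, whose modulus is $\tau_n$ evaluated at an explicit parameter. Combining the classical identity $\gamma_n(s)=2^{n-1}\tau_n(s^2-1)$ with the elementary hyperbolic relation between $r$ and $\cosh(\rho_D(x,y)/2)$ produces the factor $2^{-n}\gamma_n(\cosh(\rho_D(x,y)/2))$ in (2).

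The main obstacle in both parts is the identification of the extremal configuration: the Gr\"otzsch segment in (1) and the Teichm\"uller antipodal pair in (2). These ultimately rely on Gehring's symmetrization of the conformal modulus and are the essential non-trivial inputs; once they are granted, the remainder of the proof is conformal invariance together with routine hyperbolic-to-Euclidean bookkeeping.
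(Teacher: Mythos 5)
The paper does not actually prove Theorem \ref{thm_muLamB}: it is quoted as a known result from \cite{hkv,vuo,vuo90}, so there is no in-paper argument to compare against. Your plan is the standard derivation from those sources and its skeleton is sound: reduce $\uhp^n$ to $\B^n$ by a M\"obius map, normalize the pair by a hyperbolic isometry, and invoke the Gr\"otzsch and Teichm\"uller extremal configurations, whose identification via symmetrization is indeed the only genuinely hard input. Two points need tightening before this becomes a proof. First, in part (1) the scaled configuration $([0,e_1],\partial B^n(0,1/t))$ is not literally the paper's definition of $\gamma_n(1/t)$, which uses the complementary pair $(\overline{\B}^n,[se_1,\infty])$ in $\R^n$; you still need the inversion $z\mapsto z/|z|^2$ together with the M\"obius invariance of the modulus to pass from one normalization to the other. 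Second, and more substantively, the constant in part (2) is not fully accounted for: the identity $\gamma_n(s)=2^{n-1}\tau_n(s^2-1)$ converts $2^{-n}\gamma_n({\rm ch}(\rho_D(x,y)/2))$ into $\tfrac12\tau_n({\rm sh}^2(\rho_D(x,y)/2))$, so after your M\"obius reduction to a Teichm\"uller-type ring you must still explain where the remaining factor $\tfrac12$ comes from. It arises from a reflection step: the extremal continua can be taken symmetric with respect to the boundary sphere (or the hyperplane $\partial\uhp^n$ after sending a boundary point to $\infty$), the reflected family fills out the full Teichm\"uller configuration in $\R^n$, and by the symmetry principle for moduli the subfamily lying in the domain has exactly half the modulus of the whole. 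Your sketch attributes the entire constant to the $\gamma_n$--$\tau_n$ identity, which leaves a factor of $2$ unexplained; with the reflection argument added, the plan coincides with the classical proof.
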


\begin{theorem}\label{thm_Kformulambda}
Let $f:G\rightarrow G'$ be a $K$-quasiconformal homeomorphism. Then for all distinct $x,y\in G$,
\begin{equation*}
\mu_G(x,y)/K\leq\mu_{G'}(f(x),f(y))\leq K\mu_G(x,y)
\end{equation*}
and
\begin{equation*}
\lambda_G(x,y)/K\leq\lambda_{G'}(f(x),f(y))\leq K\lambda_G(x,y).
\end{equation*}
In other words, written as mappings between metric spaces,
\begin{align*}
f:(G,\mu_G)\to(G',\mu_{G'}),\quad\text{or}\quad
f:(G,\lambda_G^{1/(1-n)})\to(G',\lambda_{G'}^{1/(1-n)})
\end{align*}
the mapping $f$ is bi-Lipschitz. For the case of $\mu_G$,
we assume that $\capa(\partial G)>0$ and, for the case of
$\lambda_G$, let $\card(\overline{\R}^n\backslash G)\geq2$.
\end{theorem}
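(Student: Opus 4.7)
The plan is to reduce both inequalities to the defining modulus-distortion property of quasiconformal maps: a homeomorphism $f:G\to G'$ is $K$-quasiconformal if and only if $\M(\Gamma)/K\leq\M(f\Gamma)\leq K\M(\Gamma)$ for every curve family $\Gamma$ in $G$. I will invoke this inequality as a black box; everything else is bookkeeping of which curve families we apply it to.

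For the $\mu_G$ bound I would fix an arbitrary continuum $C_{xy}\subset G$ joining $x$ and $y$. Its image $f(C_{xy})$ is a continuum in $G'$ joining $f(x)$ and $f(y)$. The key observation is that $f$ sends the curve family $\Delta(C_{xy},\partial G;G)$ onto $\Delta(f(C_{xy}),\partial G';G')$: since $f:G\to G'$ is a homeomorphism, a curve in $G$ has a limit point on $\partial G$ if and only if its image in $G'$ has a limit point on $\partial G'$. The quasiconformality inequality then gives $\M(\Delta(f(C_{xy}),\partial G';G'))\leq K\M(\Delta(C_{xy}),\partial G;G))$, and taking infima over $C_{xy}$ on both sides yields $\mu_{G'}(f(x),f(y))\leq K\mu_G(x,y)$. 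The reverse inequality is obtained by running the same argument with the $K$-quasiconformal map $f^{-1}:G'\to G$, noting that every continuum in $G$ arises as a preimage of some continuum in $G'$.

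For $\lambda_G$ the strategy is entirely parallel, with pairs of continua $C_x,C_y$ replacing the single $C_{xy}$. The only extra bookkeeping is that the continua are allowed in $\overline{G}$; since only the portions inside $G$ enter the modulus, I would replace $C_x,C_y$ by $C_x\cap G$, $C_y\cap G$ at the outset, and the boundary-touching condition $\overline{C_x}\cap\partial G\neq\emptyset$ transfers to $\overline{f(C_x\cap G)}\cap\partial G'\neq\emptyset$ for the same topological reason as in the $\mu_G$ case. The bi-Lipschitz reformulation is then immediate for $\mu_G$, while for $\lambda_G$ one raises the double inequality $\lambda_G/K\leq\lambda_{G'}\circ f\leq K\lambda_G$ to the negative exponent $1/(1-n)$, which reverses and preserves the bounds and yields a bi-Lipschitz constant of $K^{1/(n-1)}$ in the metric $\lambda_G^{1/(1-n)}$.

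The main obstacle is not analytic but topological: one must verify carefully that the image of $\Delta(C_{xy},\partial G;G)$ under $f$ is exactly the corresponding curve family in $G'$, with no loss or gain of curves near the boundary. This reduces to the fact that a homeomorphism $f:G\to G'$ is proper between the two domains, so a sequence in $G$ leaving every compact subset of $G$ corresponds precisely to a sequence in $G'$ leaving every compact subset of $G'$. Once this correspondence is in place, both bi-Lipschitz statements follow from a single application of the modulus inequality in each direction, and the hypotheses $\capa(\partial G)>0$ and $\card(\overline{\R}^n\setminus G)\geq 2$ are used only to ensure that $\mu_G$ and $\lambda_G^{1/(1-n)}$ are genuine metrics in the first place.
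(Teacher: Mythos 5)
Your argument is essentially correct and is the standard proof of this result. Note that the paper itself offers no proof of Theorem \ref{thm_Kformulambda}: it is quoted as a known theorem with references to \cite{hkv, vuo, vuo90}, so there is nothing internal to compare against. Your reduction to the modulus-distortion inequality $\M(\Gamma)/K\leq \M(f(\Gamma))\leq K\M(\Gamma)$, applied to $\Delta(C_{xy},\partial G;G)$ and to $\Delta(C_x,C_y;G)$ and followed by taking infima over the competitors (which are in bijective correspondence under the homeomorphism $f$ and under $f^{-1}$), is exactly how the cited sources proceed, and your computation of the bi-Lipschitz constant $K^{1/(n-1)}$ for $\lambda_G^{1/(1-n)}$ is right. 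The one point you correctly flag as delicate --- that curves reaching $\partial G$ correspond to curves reaching $\partial G'$ --- is handled in the literature by the standard fact that the modulus of a joining family is unchanged when closed curves are replaced by their open subcurves inside the domain, so your properness argument suffices.
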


The metrics $\rho_D$, $\mu_D$ and $\lambda_D^{1/(1-n)}$
in $D\in\{\uhp^n,\B^n\}$ are all conformally invariant,
so it is natural to ask whether they are comparable
in some other fashion. Theorem \ref{thm_Kformulambda} implies
that quasiconformal mappings satisfy a version of the Schwarz lemma,
according to which mappings are H\"older continuous with respect to the
hyperbolic and the Euclidean metrics. As well known \cite[Thm 16.3, Thm 16.21]{hkv}
 for dimensions $n\geq3$, both the variants of the Theorem
\ref{thm_Kformulambda} yield different H\"older exponents,
while Theorem \ref{thm_Kformulambda} itself speaks for bi-Lipschitz
continuity in the respective two metric spaces.

The above two theorems show that quasiconformal mappings are bi-Lipschitz
in the respective metric spaces. However, because the local structure of these
spaces depends on the special function $\gamma_n,$ it is desirable to compare
metrics to Euclidean and hyperbolic metrics and this is what we will do here.
Surprisingly, it turns out that the $\mu_{\B^2}$ metric is not H\"older
equivalent to the hyperbolic metric $\rho_{\B^2},$ but nevertheless we can conclude sharp modulus of continuity estimates from the above theorems. We also prove various other results for these metrics.

\begin{theorem}\label{nonHol}
The $\mu_D$ metric, $D\in\{\uhp^2,\B^2\},$ is not H\"older continuous
with respect to $\rho_D.$
\end{theorem}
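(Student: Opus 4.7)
The plan is to exhibit a one-parameter family of point pairs on which H\"older continuity must fail, reducing the question to the logarithmic decay of the Gr\"otzsch capacity $\gamma_2$ at infinity. Since both $\rho_D$ and $\mu_D$ are conformally invariant, it suffices to treat $D=\uhp^2$. I would fix $x_t=\i$ and $y_t=\i e^t$ for small $t>0$, so that $\rho_{\uhp^2}(x_t,y_t)=t$, and then examine the asymptotics as $t\to 0^+$.

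By Theorem \ref{thm_muLamB}(1) with $n=2$,
$$\mu_D(x_t,y_t)=\gamma_2\!\bigl(\coth(t/2)\bigr),$$
and as $t\to 0^+$ the argument $\coth(t/2)=2/t+O(t)$ tends to infinity. The classical logarithmic asymptotics of the Gr\"otzsch ring modulus give $\gamma_2(s)\sim 2\pi/\log s$ as $s\to\infty$ (see, e.g., \cite[Ch.~5]{hkv}), so that
$$\mu_D(x_t,y_t)=\frac{2\pi}{\log(8/t)}\bigl(1+o(1)\bigr)\qquad(t\to 0^+).$$

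To finish, suppose towards a contradiction that there exist $C>0$ and $\alpha>0$ with $\mu_D(x,y)\le C\,\rho_D(x,y)^\alpha$ for every pair $x,y\in D$ with $\rho_D(x,y)$ sufficiently small. Applying this along the sequence $(x_t,y_t)$ forces $2\pi/\log(8/t)\le Ct^\alpha\bigl(1+o(1)\bigr)$, whence $t^\alpha\log(8/t)$ must remain bounded below by a positive constant as $t\to 0^+$. This contradicts the elementary fact that $t^\alpha\log(8/t)\to 0$ for every $\alpha>0$, so $\mu_D$ cannot be H\"older continuous with respect to $\rho_D$. The only substantive input beyond Theorem \ref{thm_muLamB} is the logarithmic asymptotic $\gamma_2(s)\sim 2\pi/\log s$, which is standard; no deeper obstacle is anticipated. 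Observe also that the failure is one-sided: the inverse comparison $\rho_D(x,y)\lesssim e^{-2\pi/\mu_D(x,y)}$ follows from the same asymptotics, explaining why H\"older continuity holds in the opposite direction and is therefore not the right notion here.
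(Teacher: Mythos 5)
Your proposal is correct and is essentially the paper's argument: both reduce the question via Theorem \ref{thm_muLamB} to the logarithmic decay of $\gamma_2(s)$ (equivalently $\mu(r)\sim\log(4/r)$) and show that $\mu_D/\rho_D^{\alpha}$ blows up as the two points coalesce. The only difference is presentational: the paper first proves non-H\"older continuity with respect to the Euclidean metric at the origin of $\B^2$ (Lemma \ref{lem-not Holder}) and then invokes the local comparability of $|x|$ and $\rho_{\B^2}(x,0)$, whereas you compute directly with the hyperbolic distance in $\uhp^2$.
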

Here, it should be noted that the same result holds for $\lambda^{-1}_{D}(x,y)$ metric as proven in Corollary \ref{cor-FerNotHol}. For the H\"older non-equivalence of $\mu_D$ and, $\lambda_D^{1/(1-n)}$ see \cite[16.6]{avv}.

%{\bf \large QUESTION. Do we have a similar result for the $\lambda_D$ metric when
%$n=2$? Recall that $\lambda_D \mu_D =4$ in this case.}

Very recently,  $\mu_G$ isometries were
thoroughly studied by D. Betsakos- S. Pouliasis \cite{bp}, X. Zhang \cite{z21},
and  S. Pouliasis- A. Solynin \cite{pso}. In this case, it turns out that
$\mu_G$ isometries are, in fact, conformal mappings.
One might expect that a similar
result also holds for the $\lambda_G^{1/(1-n)}$ metric, but this seems to be an
open problem. Isometries of some other metrics were studied by
P.  H\"ast\"o, Z. Ibragimov, D. Minda,
S. Ponnusamy, and  S. Sahoo \cite {himps}.

The structure of this article is as follows.
In Section \ref{sec2-pre} we give some necessary definitions and notations.
In Section \ref{sec3-ul bounds} we prove that the modulus metric is not H\"{o}lder continuous with respect to the hyperbolic metric.
 %we obtain upper and lower bounds for modulus
%metric in the unit disk $\mathbb{B}^2$ and in the upper-half plane
%$\mathbb{H}^2$. We also find upper and lower bounds for hyperbolic metric.
%In Section \ref{sec4-ul Landen} we apply Landen transformation to obtain upper
%bounds for modulus metric. Finally, we study the conformal modulus in
%Section \ref{sec5-con}.
%%%%%%%%%%%%%%%%%%%%%%%%%%%%%%
%%%%%%%%%%%%%%%%%%%%%%%%%%%%%%
%%%%%%%%%%%%%%%%%%%%%%%%%%%%%%

\section{Preliminaries}\label{sec2-pre}
%%%%%%%%%%%%%%%%%%%%%%%%%%%%%%
%%%%%%%%%%%%%%%%%%%%%%%%%%%%%%
%%%%%%%%%%%%%%%%%%%%%%%%%%%%%%

Let us first introduce our notations. For all points $x\in\R^n$ and any
radius $r>0$, we can define an open Euclidean ball
$B^n(x,r)=\{y\in\R^n\text{ }:\text{ }|x-y|<r\}$ and its boundary sphere
$S^{n-1}(x,r)=\{y\in\R^n\text{ }:\text{ }|x-y|=r\}$.
For the unit ball and sphere, we use the simplified notations $\B^n=B^n(0,1)$
 and $S^{n-1}=S^{n-1}(0,1)$. Denote also the $(n-1)$-dimensional surface area
 of $S^{n-1}$ by $\omega_{n-1}$ and define the following constant $c_n$
 (see Ref. \cite[p. 41]{avv}):
\begin{align*}%\label{cn}
c_2=\frac{2}{\pi},\quad
c_n=2^{1-n}\omega_{n-2}
\left(\int^{\pi/2}_0\sin^{\frac{2-n}{n-1}}t\,dt\right)^{1-n}
\text{ for }n\geq 2.
\end{align*}

The \emph{hyperbolic metric} $\rho$ can be defined in the unit ball with the
formula \cite[(4.16), p. 55]{hkv}
\begin{align*}
{\rm sh}^2\left(\frac{\rho_{\B^n}(x,y)}{2}\right)=\frac{|x-y|^2}{(1-|x|^2)(1-|y|^2)},\quad x,y\in\B^n
\end{align*}
and, by the conformal invariance of this metric, we can compute its value
in any such domain that can be mapped conformally onto the unit ball.
%In the special case $n=2$, we will have
%\begin{align*}
%{\rm th}\left(\frac{\rho_{\B^n}(x,y)}{2}\right)=\frac{|x-y|}{|1-x\overline{y}|},\quad x,y\in\B^2,
%\end{align*}
%where $\overline{y}$ is the complex conjugate of the point $y$.
A hyperbolic ball $B_{\rho}(x,M)$ defined in the unit ball $\B^n$ is equal to the Euclidean ball $B^n(y,r)$, where \cite[(4.20), p. 56]{hkv}
\begin{align*}
y=\frac{x(1-t^2)}{1-|x|^2t^2},\quad
r=\frac{(1-|x|^2)t}{1-|x|^2t^2},\quad
t={\rm th}(M/2).
\end{align*}

The \emph{modulus of a curve family} $\Gamma$ in $\R^n$ is \cite[(7.1), p. 104]{hkv}
\begin{align*}%\label{mod_Bs}
\M(\Gamma)=\inf_{\rho\in\mathcal{F}(\Gamma)}\int_{\R^n}\rho^ndm,
\end{align*}
where $\mathcal{F}(\Gamma)$ consists of all non-negative Borel-measurable functions $\rho:\R^n\to\overline{\R}^n$ such that $\int_\gamma\rho ds\geq1$ for each locally rectifiable curve $\gamma\in\Gamma$ and $m$ stands for the $n$-dimensional Lebesgue measure. Denote the family of all curves joining two non-empty sets $E$ and $F$ in a domain $G$ by $\Delta(E,F;G)$. Now, for the annular ring $D=\overline{B}^n(0,b)\backslash B^n(0,a)$ with $0<a<b$, it holds that \cite[(7.4), p. 107]{hkv}
\begin{align*}
\M(\Delta(S^{n-1}(0,a),S^{n-1}(0,b);D))=\omega_{n-1}\left(\log\frac{b}{a}\right)^{1-n}.
\end{align*}
Any domain $G$ and its compact subset $F\subset G$ form a \emph{condenser} $(G,F)$ and the \emph{capacity} of this condenser is \cite[Thm 9.6, p. 152]{hkv}
\begin{align*}
\capa(G,F)=\M(\Delta(F,\partial G;G)).
\end{align*}
A compact set $E$ in $\mathbb{R}^n$ is said to be of capacity zero, denoted ${\rm cap} (E)=0$, if there exists a bounded open set $A$ containing $E$ with the capacity of the pair $(A, E)$ is equal to zero. A compact set $E\subset \overline{\mathbb{R}}^n$, $E\neq \overline{\mathbb{R}}^n$ is said to be of capacity zero if $E$ can be mapped
by a M\"obius transformation onto a bounded set of capacity zero. If ${\rm cap} (E)=0$ does not hold, we express it as ${\rm cap}(E) > 0$.

Define next two decreasing homeomorphisms called the Gr\"otzsch capacity $\gamma_n:(1,\infty)\to(0,\infty)$ and the Teichm\"uller capacity $\tau_n:(0,\infty)\to(0,\infty)$ with the following formula \cite[(7.17), p. 121]{hkv}:
\begin{equation*}
\gamma_n(s)=\M(\Delta(\overline{\B}^n,[se_1,\infty];\R^n)),\quad s>1
\end{equation*}
and
\begin{equation*}
\tau_n(s)=\M(\Delta([-e_1,0],[se_1,\infty];\R^n)),\quad s>0,
\end{equation*}
where $e_1=(1,0,\ldots,0)$ is a unit vector in $\R^n$. For $s>1$, $\gamma_n(s)=2^{n-1}\tau_n(s^2-1)$. If $n=2$, the capacities can be computed with the formula \cite[(7.18), p. 122]{hkv},
\begin{equation}\label{gam2}
\gamma_2(1/r)=\frac{2\pi}{\mu(r)},\quad 0<r<1,
\end{equation}
where
\begin{equation*}
\mu(r)=\frac{\pi}{2}\frac{\K(\sqrt{1-r^2})}{\K(r)},\quad{\text{and}}\quad
\K(r)=\int^1_0 \frac{dx}{\sqrt{(1-x^2)(1-r^2x^2)}}.
\end{equation*}
Moreover, for $s>1$ \cite[(7.20)]{hkv}
\begin{equation}\label{gam3}
\mu(1/s)\mu\left(\frac{s-1}{s+1}\right)= \frac{\pi^2}{2},
\end{equation}
and for $0<r<1$ (see, \cite[5.30]{avv})
\begin{equation}
    {\rm arth}\, \sqrt[4]{r'}<\mu(r)<\frac{\pi^2}{4\, {\rm arth}\sqrt[4]{r}},\quad r'=\sqrt{1-r^2}.
\end{equation}
For approximation of $\mu(r)$, see \cite{krv-Landen}.
The Gr\"otzsch capacity has the following well-known estimates \cite[Thm 9.17(2), p. 160]{hkv}
\begin{align}\label{gamEst}
2^{n-1}c_n\log\left(\frac{s+1}{s-1}\right)\leq\gamma_n(s)\leq2^{n-1}c_n\mu\left(\frac{s-1}{s+1}\right)<2^{n-1}c_n\log\left(4\frac{s+1}{s-1}\right).
\end{align}
where the second inequality holds as an identity for $n=2$ by \eqref{gam3}.
%Define the homeomorphism $\varphi_{K,n}:[0,1]\to[0,1],$
%\begin{align*}
%\varphi_{K,n}(r)=\frac{1}{\gamma_n^{-1}(K\gamma_n(1/r))},\quad 0<r<1;\quad\varphi_K(0)=0,\quad\varphi_K(1)=1,
%\end{align*}
%and a constant $\lambda_n\in[4,2e^{n-1})$ by
%\begin{align*}
%\lambda_n=\exp(\lim_{t\to\infty}((\gamma_n(t)/\omega_{n-1})^{1/(1-n)}-\log t)).
%\end{align*}
%from avv 16.5,16.6!!!
%Then for $K>1>r>0,$ $\alpha=K^{1/(1-n)}$ \cite[Cor 16.3]{hkv}
%\begin{equation}\label{phibd}
 %r^{\alpha}\le \varphi_{K,n}(r)\le \lambda_n^{1-\alpha} r^{\alpha}
%\le 2^{1-1/K} K r^{1/K}\,.
%\end{equation}
%The above constants are unbounded (\cite[Section 2]{avv}):
%\[\lambda_n\to \infty, n\to \infty, \quad 2^{n-1}c_n \to 0, n\to \infty\,.\]
%It is therefore noteworthy, as shown in \cite[Lemma 9.34]{hkv},
%that $ \varphi_{K,n}(r)$
%admits  minorant and majorant functions {\it independent of the dimension.}
%In \cite{hkv}, this property  was called {\it dimension
%cancellation property} of the $\varphi_{K,n}$ function and it has enabled one to
%prove many distortion properties of quasiconformal mappings independent of $n$
%(\cite{hkv}.) Another observation is that the function $\varphi_{K,n}$ in \eqref{phibd}
%is well-controlled at $r=0$ whereas the function $\gamma_n(r)$ has a logarithmic
%singularity at $r=1$ by \eqref{gamEst}. This behavior speaks for {\it singularity cancellation} in the
%definition of $\varphi_{K,n}.$

A homeomorphism $f:G\to G'$ between two domains $G,G'\subset\R^n$, $n\geq2$, is called \emph{$K$-quasiconformal} with some constant $K\geq1$, if the two-sided inequality
\begin{align*}%\label{Kqc_ine}
\M(\Gamma)/K\leq\M(f(\Gamma))\leq K\M(\Gamma)
\end{align*}
holds for every curve family $\Gamma$ in $G$.

J. Ferrand \cite{lf} posed the question whether $\mu_G$ bi-Lipschitz homeomorphisms are quasiconformal. This question was studied in \cite{fmv} where it was
proved that these mappings are locally H\"older continuous,
but they need not be quasiconformal and thus Ferrand's question was solved
in the negative in \cite{fmv}.
Furthermore, the radial mapping $g:\B^n\to\B^n$ defined as
\begin{equation*}
    g(x)=\left\{
  \begin{array}{ll}
    |x|^{\alpha-1}x, & \hbox{$x\in\B^n\backslash\{0\}$;} \\\\
    0, & \hbox{$x=0$,}
  \end{array}
\right.
\end{equation*}
is quasiconformal, as noted in \cite[16.2]{vbook}, and H\"older continuous
but not Lipschitz with respect to the Euclidean metric. Thus, the bi-Lipschitz condition of the modulus metric under $K$-quasiconformal mappings does not imply the same property for the Euclidean metric.
%%%%%%%%%%%%%%%%%%%%%%%%%%%%%%
%~~~~~~~~~~~~~~~~~~~~~~~~~~~~~~~~~~~~~~~~~~~
%\begin{nonsec}{\bf Landen sequences.}\label{Landen sec}
%For $r\in (0,1)$ let $L(r,0)=r$ and
%\begin{equation}\label{Landen}
%  L(r,p+1)=\frac{2\sqrt{L(r,p)}}{1+L(r,p)};\quad L(r,-p-1)=\left(\frac{L(r,-p)}{1+\sqrt{1-L(r,-p)^2}}\right)^2,
%\end{equation}
%for $p=0,1,2,3,\ldots$. The sequences $\{L(r,p)\}$ and $\{L(r,-p)\}$ are called {\it ascending} and {\it descending Landen sequences}, respectively. The reader can refer \cite{krvLanden} for more information.
%\end{nonsec}
%~~~~~~~~~~~~~~~~~~~~~~~~~~~~~~~~~~~~~~~~~~~~
%~~~~~~~~~~~~~~~~~~~~~~~~~~~~~~~~~~~~~~~~~~~~
%~~~~~~~~~~~~~~~~~~~~~~~~~~~~~~~~~~~~~~~~~~~~
%~~~~~~~~~~~~~~~~~~~~~~~~~~~~~~~~~~~~~~~~~~~~
%~~~~~~~~~~~~~~~~~~~~~~~~~~~~~~~~~~~~~~~~~~~~
%~~~~~~~~~~~~~~~~~~~~~~~~~~~~~~~~~~~~~~~~~~~~
%~~~~~~~~~~~~~~~~~~~~~~~~~~~~~~~~~~~~~~~~~~~~
%~~~~~~~~~~~~~~~~~~~~~~~~~~~~~~~~~~~~~~~~~~~~
%~~~~~~~~~~~~~~~~~~~~~~~~~~~~~~~~~~~~~~~~~~~~
%~~~~~~~~~~~~~~~~~~~~~~~~~~~~~~~~~~~~~~~~~~~~
%~~~~~~~~~~~~~~~~~~~~~~~~~~~~~~~~~~~~~~~~~~~~
%~~~~~~~~~~~~~~~~~~~~~~~~~~~~~~~~~~~~~~~~~~~~
%~~~~~~~~~~~~~~~~~~~~~~~~~~~~~~~~~~~~~~~~~~~~
%~~~~~~~~~~~~~~~~~~~~~~~~~~~~~~~~~~~~~~~~~~~~
%~~~~~~~~~~~~~~~~~~~~~~~~~~~~~~~~~~~~~~~~~~~~
%~~~~~~~~~~~~~~~~~~~~~~~~~~~~~~~~~~~~~~~~~~~~

%%%%%%%%%%%%%%%%%%%%%%%%%%%%%%

\section{Upper and lower bounds for modulus metric}\label{sec3-ul bounds}
%%%%%%%%%%%%%%%%%%%%%%%%%%%%%%
%%%%%%%%%%%%%%%%%%%%%%%%%%%%%%
%%%%%%%%%%%%%%%%%%%%%%%%%%%%%%

Recently, the modulus metric $\mu_G$ has been studied in \cite{svz} where a characterization of its completeness is given. Also in \cite{svz}, a new lower bound for $\mu_G$ was found in terms of the M\"obius invariant metric $\delta_G$ (also, $\delta_G$ is called the Seittenranta metric \cite[p. 75 \& p. 199]{hkv}): If the boundary $\partial G$ is uniformly perfect, then
\begin{align}\label{mudeltalb}
\mu_G(x,y)\geq c\delta_G(x,y),
\end{align}
where the constant $c$ only depends on the dimension $n$ and the parameters of the uniform perfectness. The estimates in \eqref{gamEst} also yield
\begin{align}\label{muboundsinD}
2^{n-1}c_n\rho_D(x,y)\leq\mu_D(x,y)\leq2^{n-1}c_n\mu(1/e^{\rho_D(x,y)})<2^{n-1}c_n(\rho_D(x,y)+\log4),
\end{align}
where $D\in\{\uhp^n,\B^n\}$. A similar inequality can be also written for $\lambda_D$:
\begin{equation*}%\label{lambounds}
\begin{aligned}
c_n\log(t)
\leq\lambda_D(x,y)
\leq\frac{c_n}{2}\mu\left(t^{-2}\right)
<c_n\log(2t),\quad\text{with}\quad
t=\frac{e^{\rho_D(x,y)/2}+1}{e^{\rho_D(x,y)/2}-1}.
\end{aligned}
\end{equation*}
Because $\rho_{\B^n}=\delta_{\B^n}$, the lower bound in \eqref{muboundsinD} is compatible with \eqref{mudeltalb} up to a constant factor, but it can be still improved for small values of the hyperbolic distance $\rho_{\B^2}(x,y)$ in the two-dimensional case. Note that, for $n=2$, it follows from the inequality \eqref{muboundsinD} that
\begin{align}\label{twoDimsMubounds}
\frac{4}{\pi}\rho_D(x,y)\leq\mu_D(x,y)\leq\frac{4}{\pi}(\rho_D(x,y)+\log4),
\end{align}
and consider the following preliminary result:
%~~~~~~~~~~~~~~~~~~~~~~~~~~~~~~~~~~~~

%~~~~~~~~~~~~~~~~~~~~~~~~~~~~~~~~~~~~

\begin{lemma}\label{lem_pdec}
For all $t>0$ and $p>0$, the expression $[{\rm arth}\left(({\rm th}\,t)^{1/p}\right)]^p$ is strictly increasing with respect to $p$.
\end{lemma}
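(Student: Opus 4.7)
The plan is to show that $(\log f)'(p)>0$, where $f(p):=[{\rm arth}((\text{th}\,t)^{1/p})]^p$. Fixing $t>0$, I would put $u=\text{th}\,t\in(0,1)$ and introduce the auxiliary variables
$$
v=u^{1/p}\in(0,1),\qquad w={\rm arth}\,v={\rm arth}(u^{1/p})>0.
$$
Differentiating $\log f(p)=p\log w$ with the help of $v'(p)=-v\log u/p^2$, $({\rm arth}\,v)'=1/(1-v^2)$, and the identity $\log u=p\log v$, one obtains
$$
(\log f)'(p)=\log w+\frac{-v\log v}{(1-v^2)\,w}.
$$
Since $v=\text{th}\,w$ implies $v/(1-v^2)=\text{sh}\,w\cdot\text{ch}\,w=\tfrac12\text{sh}(2w)$, this further simplifies to
$$
(\log f)'(p)=\frac{1}{2w}\bigl[2w\log w-\text{sh}(2w)\log\text{th}\,w\bigr].
$$
Thus it will suffice to show that $F(w):=2w\log w-\text{sh}(2w)\log\text{th}\,w>0$ for every $w>0$.

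For $w\ge 1$ the bound $F(w)>0$ is immediate: $2w\log w\ge 0$, and $-\text{sh}(2w)\log\text{th}\,w>0$ because $\text{sh}(2w)>0$ and $0<\text{th}\,w<1$. For $0<w<1$ both summands in $F(w)$ are negative; dividing both sides of the inequality $2w\log w>\text{sh}(2w)\log\text{th}\,w$ by the negative quantity $2w\log\text{th}\,w$ flips the sign and reduces $F(w)>0$ to
$$
\frac{\log w}{\log\text{th}\,w}<\frac{\text{sh}(2w)}{2w}.
$$
The chain $0<\text{th}\,w<w<1$ gives $0<-\log w<-\log\text{th}\,w$, so the left-hand side lies in $(0,1)$; the elementary bound $\text{sh}\,x>x$ for $x>0$ makes the right-hand side strictly greater than $1$, and this settles the case.

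The only step that requires any real care is carrying the derivative through the two substitutions $v=u^{1/p}$ and $w={\rm arth}\,v$ so that the parameter $p$ disappears completely from $(\log f)'(p)$; the key simplification is the hyperbolic identity $v/(1-v^2)=\tfrac12\text{sh}(2w)$, which collapses the expression into the single-variable quantity $F(w)$. Once the monotonicity has been recast as $F(w)>0$, the two-case split above is elementary and avoids any appeal to deeper properties of the hyperbolic or Gr\"otzsch special functions.
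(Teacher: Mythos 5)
Your proof is correct, and while the first half coincides in substance with the paper's argument, the second half takes a genuinely different and more elementary route. Like the paper, you differentiate in $p$ and use $\log u = p\log v$ to eliminate $p$, arriving at a one-variable inequality; indeed your $F(w)=2w\log w-\mathrm{sh}(2w)\log\mathrm{th}\,w$ is exactly twice the paper's expression $\mathrm{arth}(y)\log(\mathrm{arth}(y))-\tfrac{y\log y}{1-y^2}$ under the substitution $y=\mathrm{th}\,w$. The divergence is in how positivity is proved. The paper differentiates a second time in $y$ and reduces to the inequality $\log(\mathrm{arth}(y))/\log(y)<(1+y^2)/(1-y^2)$, which it settles by invoking the strict monotonicity of $\log(\mathrm{arth}(y))/\log(y)$ on $(0,1)$ --- a claim it does not actually prove --- together with limits at $0+$. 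You instead exploit the hyperbolic identity $v/(1-v^2)=\tfrac12\mathrm{sh}(2w)$ to recast the inequality as $\log w/\log\mathrm{th}\,w<\mathrm{sh}(2w)/(2w)$ on $0<w<1$, which follows at once from $0<\mathrm{th}\,w<w<1$ (so the left side is in $(0,1)$) and $\mathrm{sh}\,x>x$ (so the right side exceeds $1$), with the case $w\ge1$ being trivial. This avoids the second differentiation and the unproved monotonicity claim entirely, so your argument is more self-contained. One small wording slip: in the case $0<w<1$ you say ``both summands in $F(w)$ are negative,'' when you mean that the two quantities $2w\log w$ and $\mathrm{sh}(2w)\log\mathrm{th}\,w$ being compared are both negative (the second \emph{summand} of $F$, namely $-\mathrm{sh}(2w)\log\mathrm{th}\,w$, is positive); the algebra that follows is nonetheless correct.
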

\begin{proof}
Since we are interested in the expression above with respect to $p$ only, we can substitute $u={\rm th}\,t$ and study $[{\rm arth}\, (u^{1/p})]^p$ for $0<u<1$ and $p>0$ instead.
By differentiation,
\begin{align}
\frac{\partial}{\partial p} [{\rm arth}\, (u^{1/p})]^p&=[{\rm arth}\, (u^{1/p})]^p\left(\log({\rm arth}(u^{1/p}))-\frac{u^{1/p}\log(u)}{p(1-u^{2/p}){\rm arth}(u^{1/p})}\right)>0\label{deriu}\\
&\Leftrightarrow
{\rm arth}(u^{1/p})\log({\rm arth}(u^{1/p}))-\frac{u^{1/p}\log(u)}{p(1-u^{2/p})}>0\nonumber\\
&\Leftrightarrow{\rm arth}(y)\log({\rm arth}(y))-\frac{y\log(y)}{1-y^2}>0\nonumber,
\end{align}
where $0<y<1$.
Again, by differentiation,
\begin{align}
\frac{d}{d y}\left({\rm arth}(y)\log({\rm arth}(y))-\frac{y\log(y)}{1-y^2}\right)&=\frac{1}{(1-y^2)^2}((1-y^2)\log({\rm arth}(y))-(1+y^2)\log(y))>0\label{deriy}\\
&\Leftrightarrow(1-y^2)\log({\rm arth}(y))-(1+y^2)\log(y)>0\nonumber\\
&\Leftrightarrow\frac{\log({\rm arth}(y))}{\log(y)}-\frac{1+y^2}{1-y^2}<0\nonumber,
\end{align}
provided that $0<y<1$.
Since $\log({\rm arth}(y))/\log(y)$ is strictly decreasing on $(0,1)$ and $(1+y^2)/(1-y^2)$ is strictly increasing on $(0,1)$, their difference is strictly decreasing on $(0,1)$ and the following holds:
\begin{align*}
\frac{\log({\rm arth}(y))}{\log(y)}-\frac{1+y^2}{1-y^2}< \lim_{y\to0+}\left(\frac{\log({\rm arth}(y))}{\log(y)}-\frac{1+y^2}{1-y^2}\right)=1-1=0.
\end{align*}
Consequently, the derivative \eqref{deriy} is positive on $(0,1)$, the differentiated expression in \eqref{deriy} is therefore strictly increasing on this interval, and we have the following lower bound:
\begin{align*}
{\rm arth}(y)\log({\rm arth}(y))-\frac{y\log(y)}{1-y^2}> \lim_{y\to0+}\left({\rm arth}(y)\log({\rm arth}(y))-\frac{y\log(y)}{1-y^2}\right)=0.
\end{align*}
Thus, the derivative \eqref{deriu} is positive for $0<u<1$ and $p>0$ and the original expression $\left[{\rm arth}(u^{1/p})\right]^p$ is strictly increasing with respect to $p>0$, from which our lemma follows.
\end{proof}

\begin{corollary}\label{cor_muLowBound}
For all $x,y\in D\in\{\uhp^2,\B^2\}$, the inequality
\begin{align*}
\mu_D(x,y)
\geq\frac{8}{\pi\sqrt[4]{2}}\rho_D(x,y)^{1/4}
\end{align*}
holds.
\end{corollary}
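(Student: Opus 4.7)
The plan is to combine the explicit formula for $\mu_D$ in terms of the hyperbolic distance with the given upper bound on $\mu(r)$, and then apply Lemma \ref{lem_pdec} with $p=4$ to turn everything into a clean power of $\rho_D$. The target inequality has a suspicious $\sqrt[4]{2}$ and a $1/4$ exponent, which strongly suggests that the exponent arises from specializing the monotonicity statement of Lemma \ref{lem_pdec} at exactly $p=4$, comparing the value at $p=4$ against its value at $p=1$.

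First, I would invoke Theorem \ref{thm_muLamB}(1) and \eqref{gam2} to write
\begin{equation*}
\mu_D(x,y)=\gamma_2\!\left(\frac{1}{\tnh(\rho_D(x,y)/2)}\right)=\frac{2\pi}{\mu(\tnh(\rho_D(x,y)/2))}.
\end{equation*}
Next, setting $r=\tnh(\rho_D(x,y)/2)\in(0,1)$ and using the upper bound $\mu(r)<\pi^2/(4\,\arth\sqrt[4]{r})$ stated right before the lemma, I get the intermediate estimate
\begin{equation*}
\mu_D(x,y)>\frac{8}{\pi}\,\arth\!\left(\tnh(\rho_D(x,y)/2)^{1/4}\right).
\end{equation*}

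Now write $t=\rho_D(x,y)/2>0$; the problem reduces to showing $\arth\!\bigl((\tnh t)^{1/4}\bigr)\geq (t/1)^{1/4}/\sqrt[4]{2}\cdot\sqrt[4]{2}=\ldots$, or more precisely that
\begin{equation*}
\bigl[\arth\bigl((\tnh t)^{1/4}\bigr)\bigr]^{4}\geq t.
\end{equation*}
This is exactly the content of Lemma \ref{lem_pdec} evaluated at $p=4$: the function $p\mapsto[\arth((\tnh t)^{1/p})]^{p}$ is strictly increasing, and at $p=1$ it takes the value $\arth(\tnh t)=t$; hence at $p=4$ it strictly exceeds $t$. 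Taking fourth roots and then substituting $t=\rho_D(x,y)/2$ yields $\arth\bigl((\tnh(\rho_D(x,y)/2))^{1/4}\bigr)>\rho_D(x,y)^{1/4}/\sqrt[4]{2}$, which, inserted into the intermediate estimate, gives the claimed inequality.

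I do not expect any serious obstacle: the lemma has already done all of the analytic work, and the remaining task is purely bookkeeping to align the $1/4$ exponent in the claim with $p=4$ in the lemma and to track the factor $\sqrt[4]{2}$ coming from the $\rho/2$ inside $\tnh$. The only mild subtlety is that the step using the upper bound for $\mu(r)$ requires $r\in(0,1)$, which is automatic since $\tnh(\rho_D(x,y)/2)\in(0,1)$ for distinct $x,y$; boundary cases $x=y$ are trivial as both sides of the inequality vanish.
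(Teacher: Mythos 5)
Your proposal is correct and follows essentially the same route as the paper's own proof: the explicit formula $\mu_D(x,y)=2\pi/\mu(\tnh(\rho_D(x,y)/2))$, the upper bound $\mu(r)<\pi^2/(4\,\arth\sqrt[4]{r})$, and Lemma \ref{lem_pdec} compared at $p=4$ versus $p=1$ to obtain $\arth\bigl((\tnh t)^{1/4}\bigr)>t^{1/4}$ with $t=\rho_D(x,y)/2$. The only cosmetic issue is the garbled intermediate display before you state the clean inequality $[\arth((\tnh t)^{1/4})]^4\geq t$, but the argument itself is sound.
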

\begin{proof}
If $x=y$, the result holds trivially, so let us assume that $x\neq y$ below. By Lemma \ref{lem_pdec}, the expression $[{\rm arth}\left(({\rm th}\,t)^{1/p}\right)]^p$ is strictly increasing with respect to $p$ for all $t>0$ and $p>0$, so
\begin{align*}
[{\rm arth}\left(({\rm th}\,t)^{1/4}\right)]^4>{\rm arth}({\rm th}(t))=t
\Leftrightarrow
{\rm arth}({\rm th}(t)^{1/4})>t^{1/4}.
\end{align*}
Combining this result to Theorem \ref{thm_muLamB}, the formula \eqref{gam2} and \cite[(5.29)]{avv}, we have
\begin{equation*}
\mu_D(x,y)=\frac{2\pi}{\mu({\rm th}(\rho_D(x,y)/2))}
%>\frac{2\pi}{\pi^2/4{\rm arth}(({\rm th}(\rho_D(x,y)/2)^{1/4}))}
>\frac{8}{\pi}{\rm arth}(({\rm th}(\rho_D(x,y)/2)^{1/4})
>\frac{8}{\pi}(\rho_D(x,y)/2)^{1/4}.
\end{equation*}
\end{proof}

\begin{remark}
Corollary \ref{cor_muLowBound} gives a better lower bound for the modulus metric than the inequality \eqref{twoDimsMubounds} if and only if the hyperbolic distance $\rho_{\B^2}(x,y)$ is less than 2. See Figure \ref{fig-mulowerb} for more details. We note that in Figure \ref{Fig01}, $\rho_{\B^2}(x,0)<2$ if and only if $0<x\leq 0.75$. Also, in Figure \ref{Fig02}, $\rho_{\B^2}(x,0)\geq 2$ if and only if $0.75< x<1$.
%*************************************************
\begin{figure}[!ht]
\centering
\subfigure[]{
\includegraphics[width=2.9in]{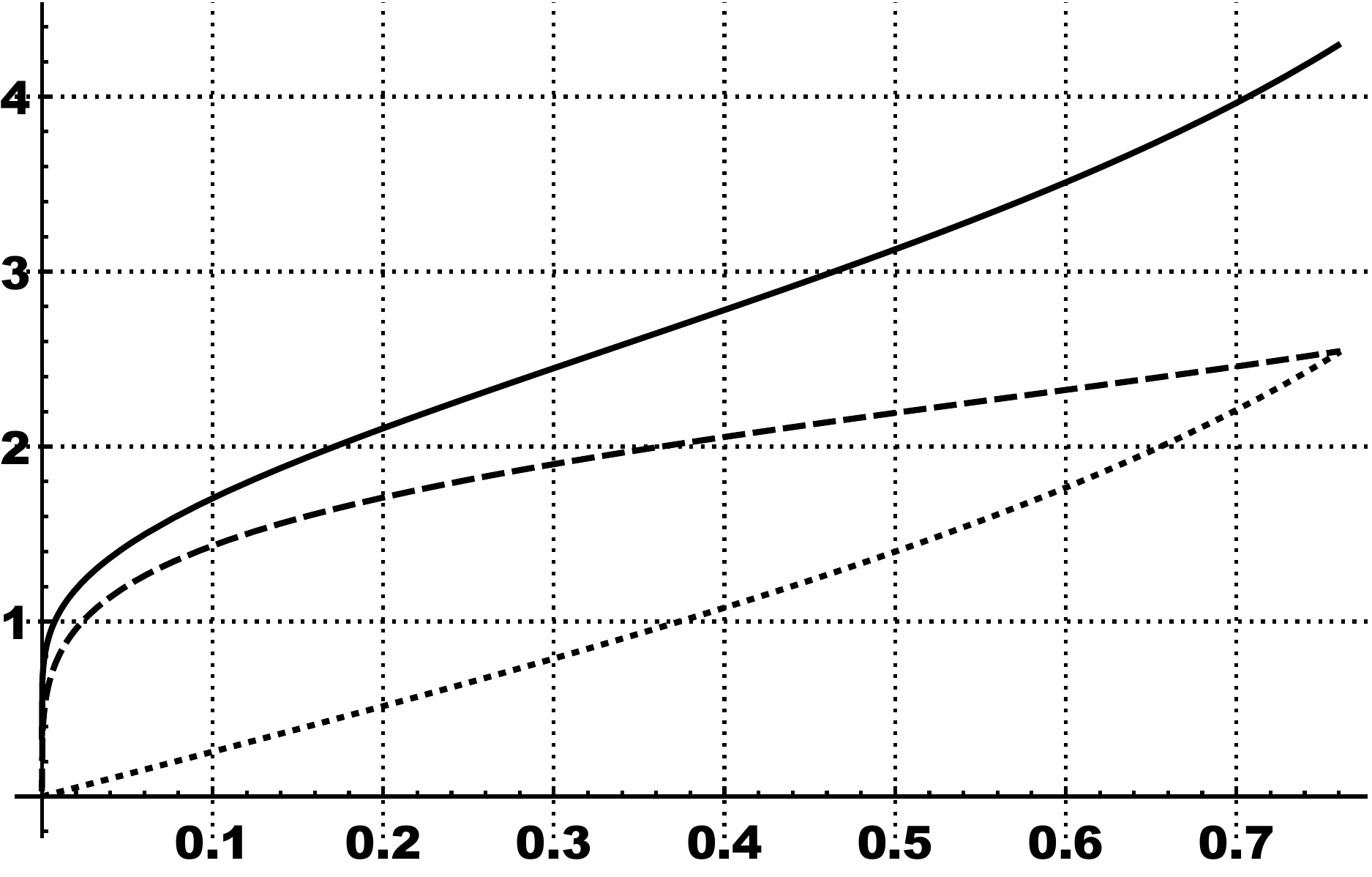}
    \label{Fig01}
}
\hspace*{7mm}
\subfigure[]{
\includegraphics[width=2.9in]{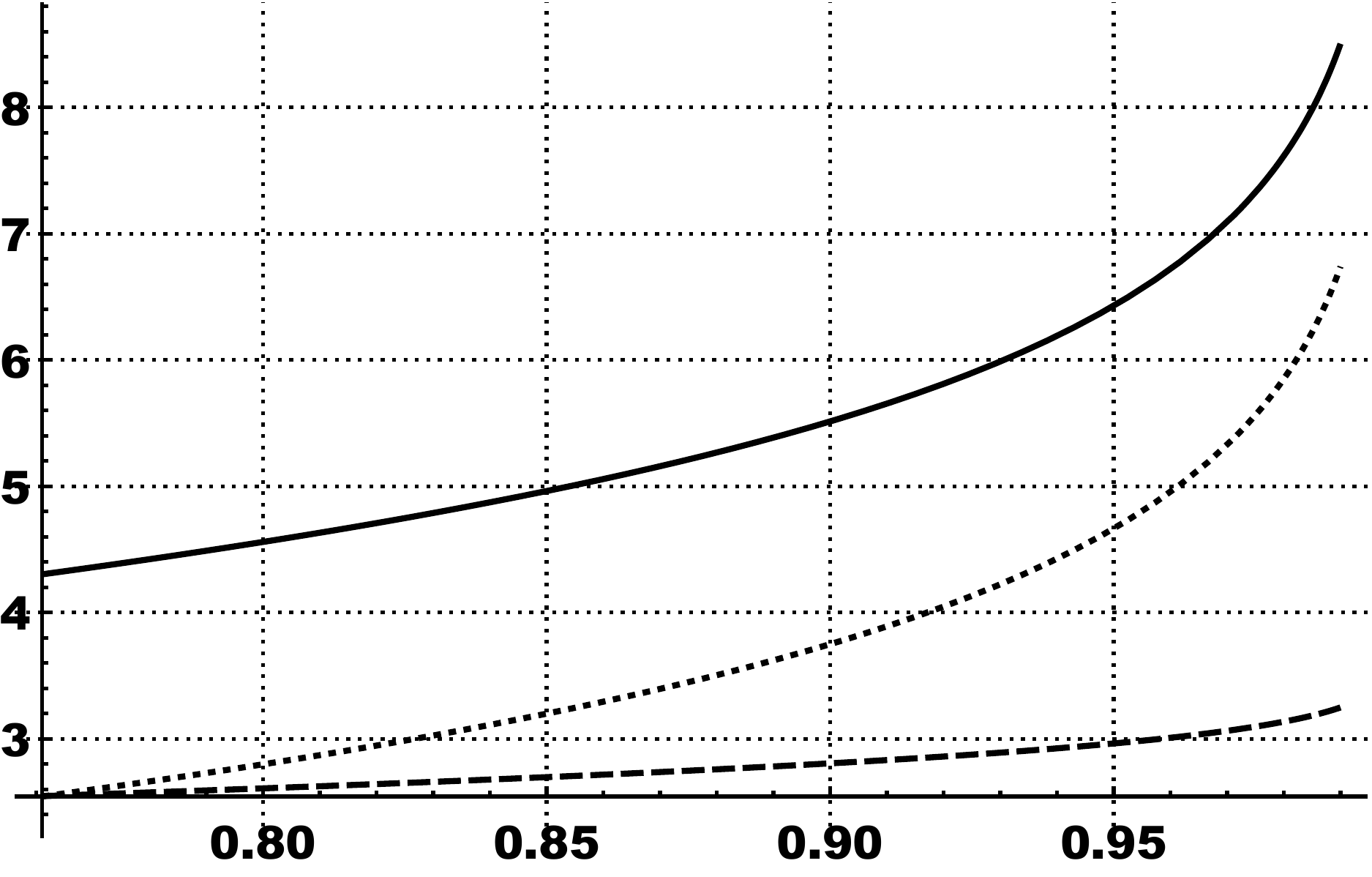}
    \label{Fig02}
}
\caption[Figures]
{\subref{Fig01}: The graph of $\mu_{\B^2}(x,0)$, its lower bound in Corollary \ref{cor_muLowBound} (dashed), and its lower bound in \eqref{twoDimsMubounds} (dotted) when $\rho_{\B^2}(x,0)<2$, where $0<x\leq 0.75$.
 \subref{Fig02}: The graph of $\mu_{\B^2}(x,0)$, its lower bound in \eqref{twoDimsMubounds} (dotted), and its lower bound in Corollary \ref{cor_muLowBound} (dashed) when $\rho_{\B^2}(x,0)\geq 2$, where $0.75<x<1$.}
 \label{fig-mulowerb}
\end{figure}
 %Figures--------------------------------------
\end{remark}

In article \cite{sinb}, the Euclidean midpoint rotation was introduced
as a new way to find upper and lower bounds for the triangular ratio
metric. This metric was originally introduced in 2002 by P. H\"ast\"o \cite{h}
and recently studied in \cite{sch, fss, sqm}. In the midpoint rotation,
two distinct points $x,y\in\B^2$ are rotated around their Euclidean
midpoint $(x+y)/2$ so that the smaller angle $\nu$ between lines
$L(x,y)$ and $L(0,(x+y)/2)$ varies on the interval $[0,\pi/2]$.
See Figure \ref{fig1}. Here, we assume that $x\neq -y$, because otherwise
the midpoint $(x+y)/2$ is the origin and the hyperbolic distance
$\rho_{\B^2}(x,y)$ is invariant under rotations around the origin. As
explained in Theorem \ref{thm_emrForRho}, the hyperbolic distance of the rotated points is decreasing with respect to $\nu$ and, since
$\nu=0$ when the rotated points are collinear with the origin and
$\nu=\pi/2$ when their absolute values are equivalent, the Euclidean
midpoint rotation yields upper and lower bounds for the hyperbolic metric.

\begin{figure}[ht]
    \centering
    \begin{tikzpicture}[scale=6]
    \draw[thick] (1,0) arc (0:90:0.9);
    \draw[thick] (0.4,0.4) circle (0.3cm);
    \draw[thick] (0.187,0.613) -- (0.613,0.187);
    \draw[thick] (0,0) -- (0.9,0.9);
    \draw[thick] (0.4+0.05*1.9,0.4+0.295*1.9) -- (0.4-0.05*1.4,0.4-0.295*1.4);
    \node [black] at (0,0) {\textbullet};
    \node [black] at (0.4,0.4) {\textbullet};
    %\draw[thick] (0,0) circle (0.01cm);
    %\draw[thick] (0.4,0.4) circle (0.01cm);
   % \draw[thick] (0.45,0.695) circle (0.01cm);
    \node [black] at (0.45,0.695) {\textbullet};
    \node [black] at (0.35,0.105) {\textbullet};
    %\draw[thick] (0.35,0.105) circle (0.01cm);
    \draw[thick] (0.44,0.44) arc (45:83:0.056);
    \node[scale=1.3] at (-0.03,0.07) {$0$};
    \node[scale=1.3] at (0.41,0.74) {$x$};
    \node[scale=1.3] at (0.38,0.05) {$y$};
    \node[scale=1.3] at (0.45,0.5) {$\nu$};
    \end{tikzpicture}
    \caption{In the Euclidean midpoint rotation, two distinct points $x,y$ in the unit disk $\B^2$ are rotated around their midpoint $(x+y)/2$ so that the smaller angle $\nu$ between the lines $L(x,y)$ and $L(0,(x+y)/2)$ increases from $0$ to $\pi/2$.}
    \label{fig1}
\end{figure}
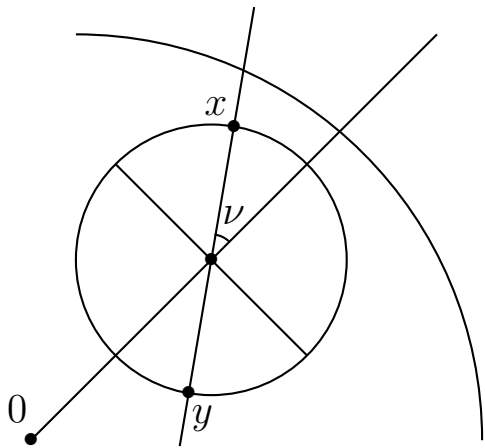

\begin{theorem}\label{thm_emrForRho}
For all distinct points $x,y\in\B^2$ such that $x\neq-y$,
the hyperbolic distance $\rho_{\B^2}(x,y)$ decreases as $x$ and $y$
are rotated around their Euclidean midpoint so that the smaller
angle between the lines $L(x,y)$ and $L(0,(x+y)/2)$ increases from $0$ to $\pi/2$. Furthermore,
\begin{align*}
\frac{2|x-y|}{\sqrt{4-8x\cdot y+(|x|^2+|y|^2)^2}}
\leq{\rm th}\frac{\rho_{\B^2}(x,y)}{2}
\leq\frac{|x-y|}{1-x\cdot y},
\end{align*}
where equality holds in the first inequality
if and only if $|x|=|y|$ and in the second inequality
if and only if $x,y$ are collinear with the origin. Note also
that ${|x-y|}/({1-x\cdot y})<1$ if and only if $|x+y|+|x-y|<2$.
\end{theorem}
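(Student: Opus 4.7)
The plan is to reduce everything to an explicit formula for ${\rm th}(\rho_{\B^2}(x,y)/2)$. From the hyperbolic distance formula in $\B^n$ combined with the identity ${\rm th}^2 = {\rm sh}^2/(1 + {\rm sh}^2)$, the algebraic expansion $(1-|x|^2)(1-|y|^2) + |x-y|^2 = 1 - 2x\cdot y + |x|^2|y|^2$ produces
$$ {\rm th}^2\frac{\rho_{\B^2}(x,y)}{2} \;=\; \frac{|x-y|^2}{1 - 2x\cdot y + |x|^2|y|^2}. $$
Both displayed bounds then follow from classical inequalities applied to the denominator. Cauchy--Schwarz gives $(x\cdot y)^2 \leq |x|^2|y|^2$, with equality if and only if $x$ and $y$ are collinear with the origin; replacing $|x|^2|y|^2$ by $(x\cdot y)^2$ shrinks the denominator to $(1 - x\cdot y)^2$ and yields the upper bound (squaring being harmless since $1 - x\cdot y > 0$ on $\B^2$). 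The AM--GM inequality $4|x|^2|y|^2 \leq (|x|^2+|y|^2)^2$, with equality if and only if $|x|=|y|$, substituted into the denominator and reorganized, yields the lower bound.

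For the monotonicity under rotation, I would set $m = (x+y)/2$, $u = (x-y)/2$, identify $\R^2$ with $\C$, and choose coordinates so that $m$ lies on the positive real axis. The rotated family can then be written as
$$ x_\nu = m + |u|\, e^{i\nu}, \qquad y_\nu = m - |u|\, e^{i\nu}, \qquad \nu \in [0, \pi/2], $$
because the direction $e^{i\nu}$ of the line $L(x_\nu, y_\nu)$ makes precisely angle $\nu$ with the real axis, i.e., with $L(0,m)$. A short computation shows that $|x_\nu - y_\nu| = 2|u|$ and, by cancellation of cross terms,
$$ x_\nu \cdot y_\nu \;=\; \Re\bigl((m + |u| e^{-i\nu})(m - |u| e^{i\nu})\bigr) \;=\; m^2 - |u|^2, $$
so the numerator of the ${\rm th}^2$ identity and the quantity $x\cdot y$ are both invariant under the rotation. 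The only varying ingredient is
$$ |x_\nu|^2 |y_\nu|^2 \;=\; \bigl(m^2 + 2m|u|\cos\nu + |u|^2\bigr)\bigl(m^2 - 2m|u|\cos\nu + |u|^2\bigr) \;=\; (m^2 + |u|^2)^2 - 4 m^2|u|^2 \cos^2\nu, $$
which is strictly increasing in $\nu \in [0, \pi/2]$ provided $m \neq 0$ and $u \neq 0$ (the assumptions $x \neq -y$ and $x \neq y$). By the ${\rm th}^2$ identity, $\rho_{\B^2}(x_\nu, y_\nu)$ is then strictly decreasing, which is the monotonicity claim. At the two endpoints, $\nu = 0$ produces $|x_\nu|^2|y_\nu|^2 = (m^2 - |u|^2)^2 = (x_\nu \cdot y_\nu)^2$ (Cauchy--Schwarz equality, collinearity with the origin), and $\nu = \pi/2$ produces $|x_\nu| = |y_\nu|$ (AM--GM equality). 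This gives both displayed bounds a direct geometric interpretation.

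Finally, the side remark is elementary algebra. Since $1 - x\cdot y > 0$ on $\B^2$, the condition $|x-y|/(1-x\cdot y) < 1$ is equivalent, after squaring, to $|x|^2 + |y|^2 - (x\cdot y)^2 < 1$. On the other hand, squaring $|x+y|+|x-y| < 2$ and using the identities $|x+y|^2 + |x-y|^2 = 2(|x|^2+|y|^2)$ and $|x+y|^2 |x-y|^2 = (|x|^2+|y|^2)^2 - 4(x\cdot y)^2$ reduces this too to the inequality $|x|^2 + |y|^2 - (x\cdot y)^2 < 1$. The main technical obstacle will be the rotation step: establishing the cancellation $x_\nu \cdot y_\nu = m^2 - |u|^2$ and correctly identifying $\nu$ with the geometric angle between the two lines. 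Everything else reduces to classical inequalities and direct algebra.
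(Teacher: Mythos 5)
Your proof is correct. The monotonicity part is essentially the argument in the paper: the authors also place the midpoint on the positive real axis, write $x=k+de^{i\nu}$, $y=k-de^{i\nu}$ with $k=|x+y|/2$, $d=|x-y|/2$, and observe that ${\rm th}(\rho_{\B^2}(x,y)/2)=2d/\sqrt{(1+d^2-k^2)^2+4k^2d^2\sin^2\nu}$ is decreasing in $\nu$; your computation of $|1-x\bar y|^2=1-2x\cdot y+|x|^2|y|^2$ via $|x_\nu|^2|y_\nu|^2=(m^2+|u|^2)^2-4m^2|u|^2\cos^2\nu$ is the same calculation in slightly different packaging. Where you genuinely diverge is in how the two displayed bounds are obtained: the paper reads them off as the endpoint values $\nu=0$ and $\nu=\pi/2$ of the monotone expression and then translates back to $x\cdot y$, $|x|$, $|y|$, whereas you derive them directly from the identity ${\rm th}^2(\rho/2)=|x-y|^2/(1-2x\cdot y+|x|^2|y|^2)$ by applying Cauchy--Schwarz and AM--GM to the single term $|x|^2|y|^2$ in the denominator. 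Your route makes the equality characterizations ($x,y$ collinear with $0$, resp.\ $|x|=|y|$) immediate and decouples the bounds from the rotation statement, at the cost of having to observe afterwards that the two equality cases coincide with the rotation endpoints; the paper's route gets the bounds for free once monotonicity is established. A further small plus of your write-up is that you actually verify the closing equivalence $|x-y|/(1-x\cdot y)<1\iff|x+y|+|x-y|<2$ (the paper only asserts it); your reduction of both sides to $|x|^2+|y|^2-(x\cdot y)^2<1$ is correct, though you should note explicitly that the second squaring step, from $\sqrt{(|x|^2+|y|^2)^2-4(x\cdot y)^2}<2-(|x|^2+|y|^2)$, is reversible because $|x|^2+|y|^2<2$ on $\B^2$ makes the right-hand side positive.
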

\begin{proof}
Fix distinct points $x,y\in\B^2$ such that $x\neq-y$.
Denote $d=|x-y|/2$ and $k=|x+y|/2$, and note that $k,d\in(0,1)$.
Let $\nu\in[0,\pi/2]$ be the magnitude of the smaller angle between the lines $L(x,y)$ and $L(0,(x+y)/2)$. Now, we have,
\begin{align*}
{\rm th}(\rho_{\B^2}(x,y)/2)
&=\frac{|x-y|}{|1-x\overline{y}|}\\
&=\frac{2d}{|1-(k+d(\cos(\nu)+\sin(\nu)i))(k+d(-\cos(\nu)+\sin(\nu)i)|}\\
&=\frac{2d}{|1+d^2-k^2-2kd\sin(\nu)i|}
=\frac{2d}{\sqrt{(1+d^2-k^2)^2+4k^2d^2\sin^2(\nu)}}.
\end{align*}
Trivially, the quotient above is decreasing with respect to $\nu$ and it attains its minimum $2d/\sqrt{(1+d^2-k^2)^2+4k^2d^2}$ at $\nu=\pi/2$ and its maximum $2d/(1+d^2-k^2)$ with respect to $\nu$ at $\nu=0$. Given $|x-y|^2=|x|^2+|y|^2-2x\cdot y$, we can easily show that
\begin{align*}
\frac{2d}{\sqrt{(1+d^2-k^2)^2+4k^2d^2}}
&=\frac{4|x-y|}{\sqrt{(4+|x-y|^2-|x+y|^2)^2+4|x+y|^2|x-y|^2}}\\
&=\frac{2|x-y|}{\sqrt{4-8x\cdot y+(|x|^2+|y|^2)^2}}
\end{align*}
and
\begin{equation*}
\frac{2d}{1+d^2-k^2}
=\frac{4|x-y|}{4+|x-y|^2-|x+y|^2}
=\frac{|x-y|}{1-x\cdot y}.
\end{equation*}
Note that the points always stay in $\B^2$ if the rotation is done so that $\nu$ increases on the interval $[0,\pi/2]$ whereas a rotation decreasing $v$ might move one of the points outside of $\B^2$ if $k+d>1$.
\end{proof}

By \cite[7.64(26-27), p. 156]{avv},
\begin{equation}\label{ine_rhobonds}
\begin{aligned}
\frac{|x-y|}{\min\{|x-y|+\sqrt{1-|x|^2}\sqrt{1-|y|^2},1+|x||y|\}}
&\leq{\rm th}\frac{\rho_{\B^2}(x,y)}{2}\\
\leq\frac{|x-y|}{\max\{|x-y|+(1-|x|)(1-|y|),1-|x||y|\}}.
\end{aligned}
\end{equation}
Compared to these bounds, the bounds of Theorem \ref{thm_emrForRho} are better in some cases but not always. For instance, if $x=0.6+0.3i$ and $y=0.1+0.1i$, both the upper and lower bound of Theorem \ref{thm_emrForRho} are better than those of inequality \eqref{ine_rhobonds}, but the bounds of Theorem \ref{thm_emrForRho} are worse for $x=-0.7+0.7i$ and $y=0.65-0.6i$. We summarize our observations in the following table.
\vspace{0.5cm}
\begin{center}
\begin{tabular}{ |c|c|c|c|c|c| }
 \hline
 $x$ & $y$ & L.H.S Thm \ref{thm_emrForRho} & L.H.S \eqref{ine_rhobonds} & R.H.S Thm \ref{thm_emrForRho} & R.H.S \eqref{ine_rhobonds}  \\
 \hline
 $0.6+0.3i$ & $0.1+0.1i$ & {\bf 0.575624} & 0.491855 & {\bf 0.591776} & 0.594959 \\
 \hline
  $-0.7+0.7i$ & $0.65-0.6i$ & 0.997999 & {\bf 0.999183} & 0.999555 & {\bf 0.999381} \\
 \hline
\end{tabular}
\end{center}
\vspace{0.5cm}

\begin{corollary}
For all distinct points $x,y\in\B^2$ such that $x\neq-y$,
 the distance $\mu_{\B^2}(x,y)$ decreases as $x$ and $y$ are rotated
 around their Euclidean midpoint so that the smaller angle between lines $L(x,y)$ and $L(0,(x+y)/2)$ increases from $0$ to $\pi/2$. Furthermore, if $|x+y|+|x-y|<2$,
\begin{align*}
\gamma_2\left(\frac{\sqrt{4-8x\cdot y+(|x|^2+|y|^2)^2}}{2|x-y|}\right)
\leq\mu_{\B^2}(x,y)
\leq\gamma_2\left(\frac{1-x\cdot y}{|x-y|}\right),
\end{align*}
where equality holds in the first inequality if and only if $|x|=|y|$ and in the second inequality if and only if $x,y$ are collinear with the origin. If $|x+y|+|x-y|\geq2$, only the first inequality above holds.
\end{corollary}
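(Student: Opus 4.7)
The proof is essentially a direct transfer of Theorem \ref{thm_emrForRho} through the identity of Theorem \ref{thm_muLamB}. The plan is to combine the monotonicity of $\gamma_2$ with the midpoint rotation estimates for the hyperbolic metric.

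First, by Theorem \ref{thm_muLamB}(1), for $D=\B^2$ we have
\begin{align*}
\mu_{\B^2}(x,y)=\gamma_2\!\left(\frac{1}{\mathrm{th}(\rho_{\B^2}(x,y)/2)}\right).
\end{align*}
Since $\gamma_2:(1,\infty)\to(0,\infty)$ is a decreasing homeomorphism and $s\mapsto 1/s$ is also decreasing on $(0,1)$, the right-hand side is a strictly increasing function of $\mathrm{th}(\rho_{\B^2}(x,y)/2)$, and hence of $\rho_{\B^2}(x,y)$. Therefore, the first assertion (monotonicity under Euclidean midpoint rotation) follows immediately from the corresponding monotonicity statement for $\rho_{\B^2}$ in Theorem \ref{thm_emrForRho}.

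Next, for the two inequalities, I would simply apply $\gamma_2(1/\cdot)$ to the bounds
\begin{align*}
\frac{2|x-y|}{\sqrt{4-8x\cdot y+(|x|^2+|y|^2)^2}}\leq \mathrm{th}\frac{\rho_{\B^2}(x,y)}{2}\leq \frac{|x-y|}{1-x\cdot y}
\end{align*}
of Theorem \ref{thm_emrForRho}. Because $\gamma_2(1/\cdot)$ is strictly increasing, the inequalities reverse direction correctly and yield precisely the claimed lower and upper bounds on $\mu_{\B^2}(x,y)$. The equality cases ($|x|=|y|$ for the lower bound, $x,y$ collinear with the origin for the upper bound) transfer unchanged from Theorem \ref{thm_emrForRho} by strict monotonicity of $\gamma_2(1/\cdot)$.

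The only non-trivial bookkeeping is verifying that the arguments of $\gamma_2$ actually lie in its domain $(1,\infty)$. For the lower bound, I would observe that
\begin{align*}
\frac{\sqrt{4-8x\cdot y+(|x|^2+|y|^2)^2}}{2|x-y|}>1 \iff (|x|^2+|y|^2-2)^2>0,
\end{align*}
which holds on $\B^2\times\B^2\setminus\{x=y\}$ since $|x|,|y|<1$. For the upper bound, I would show the equivalence
\begin{align*}
\frac{|x-y|}{1-x\cdot y}<1 \iff |x+y|+|x-y|<2
\end{align*}
by using the identity $4(1-x\cdot y)=4+|x-y|^2-|x+y|^2$, rewriting the inequality as $(2-|x-y|)^2>|x+y|^2$, and taking square roots (valid since $|x-y|<2$). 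This explains why the upper bound is stated under the hypothesis $|x+y|+|x-y|<2$ and why, if this fails, only the first inequality survives. The main obstacle — if one can call it that — is just keeping track of the reversal of monotonicity when composing with $\gamma_2(1/\cdot)$; the rest is mechanical.
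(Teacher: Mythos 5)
Your proposal is correct and follows exactly the route the paper intends: the corollary is stated without proof precisely because it is the direct transfer of Theorem \ref{thm_emrForRho} through the identity $\mu_{\B^2}(x,y)=\gamma_2(1/{\rm th}(\rho_{\B^2}(x,y)/2))$ of Theorem \ref{thm_muLamB} together with the fact that $\gamma_2(1/\cdot)$ is strictly increasing. Your domain checks (the lower-bound argument exceeding $1$ via $(|x|^2+|y|^2-2)^2>0$, and the equivalence $|x-y|/(1-x\cdot y)<1\iff|x+y|+|x-y|<2$ already noted in Theorem \ref{thm_emrForRho}) are exactly the right bookkeeping and are carried out correctly.
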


\begin{lemma}\label{lem-not Holder}
The modulus metric defined in the unit disk is not H\"older continuous with respect to the Euclidean metric.
\end{lemma}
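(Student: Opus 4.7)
The plan is to exploit a basic tension between the two metrics on $\B^2$: the Euclidean diameter of $\B^2$ is finite (equal to $2$), whereas the modulus metric $\mu_{\B^2}(x,y)$ can be made arbitrarily large by pushing one argument toward the boundary. If $\mu_{\B^2}$ were H\"older continuous with respect to the Euclidean metric with constants $C>0$ and $\alpha>0$, then we would have $\mu_{\B^2}(x,y)\le C|x-y|^{\alpha}\le C\cdot 2^{\alpha}$ for all $x,y\in\B^2$, forcing $\mu_{\B^2}$ to be globally bounded. Consequently, it is enough to produce a single family of pairs in $\B^2$ along which $\mu_{\B^2}$ diverges while the Euclidean distance stays bounded above.

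The simplest such family is obtained by fixing $y=0$ and taking $x=re_1$ with $r\in(0,1)$, $r\to 1^-$. Then $|x-y|=r<1$, while $\rho_{\B^2}(re_1,0)=\log\frac{1+r}{1-r}\to\infty$. By the left-hand inequality in \eqref{twoDimsMubounds},
\[
\mu_{\B^2}(re_1,0)\ \ge\ \frac{4}{\pi}\,\rho_{\B^2}(re_1,0)\ =\ \frac{4}{\pi}\log\frac{1+r}{1-r}\ \longrightarrow\ \infty,
\]
which contradicts the uniform bound $\mu_{\B^2}(re_1,0)\le C\cdot r^{\alpha}\le C$ coming from the assumed H\"older inequality. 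One could equally well invoke Theorem \ref{thm_muLamB} and use that $\mu_{\B^2}(re_1,0)=\gamma_2(1/r)\to\infty$ as $r\to 1^{-}$, but the estimate \eqref{twoDimsMubounds} already does all the work. There is essentially no technical obstacle here: the proof reduces entirely to the known divergence of $\mu_{\B^2}(\cdot,0)$ at $\partial\B^2$ together with the finiteness of the Euclidean diameter of $\B^2$.
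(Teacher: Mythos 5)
Your argument is logically correct and does establish the lemma as literally stated: $\mu_{\B^2}$ is unbounded (e.g.\ $\mu_{\B^2}(re_1,0)\ge\tfrac{4}{\pi}\log\tfrac{1+r}{1-r}\to\infty$ as $r\to1^-$ by \eqref{twoDimsMubounds}), while $|x-y|\le 2$, so no global bound $\mu_{\B^2}(x,y)\le C|x-y|^{\alpha}$ can hold. However, this is a genuinely different route from the paper's, and the difference matters. The paper proves the lemma by a \emph{local} analysis at coincident points: using $\mu_{\B^2}(x,0)=\gamma_2(1/|x|)=2\pi/\mu(|x|)$ and the estimate $\mu(r)\le\log\bigl(2(1+\sqrt{1-r^2})/r\bigr)$, it shows that $\mu_{\B^2}(x,0)$ tends to $0$ only logarithmically as $x\to0$, whence $\mu_{\B^2}(x,0)/|x|^{w}\to\infty$ for \emph{every} $w>0$. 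Your proof instead detects the failure only at the boundary, where $\mu_{\B^2}$ blows up. The caveat is that Lemma \ref{lem-not Holder} is immediately invoked to prove Theorem \ref{nonHol}, which asserts the lack of H\"older continuity with respect to the \emph{hyperbolic} metric $\rho_D$; there your mechanism breaks down completely, because $\rho_{\B^2}$ is itself unbounded and in fact $\mu_D(x,y)\le\tfrac{4}{\pi}(\rho_D(x,y)+\log 4)$ shows that $\mu_D$ \emph{is} dominated by $C\rho_D$ away from the diagonal. The obstruction to H\"older continuity with respect to $\rho_D$ lives entirely at small distances, and it is precisely the paper's local estimate (combined with $\rho_{\B^2}(x,0)\sim 2|x|$ as $x\to0$) that transfers to the hyperbolic metric. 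So while your proof closes the lemma as written, it proves it for the ``wrong reason'' and would leave Theorem \ref{nonHol} and Corollary \ref{cor-FerNotHol} unproved; you should supplement it with the local divergence of $\mu_{\B^2}(x,0)/|x|^{w}$ as $x\to0^+$.
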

\begin{proof}
First, fix $w>0$ and $x\in(0,1)\subset\B^2$.
By Theorem \ref{thm_muLamB}, the formula \eqref{gam2},
and the inequality \cite[(7.21), p. 122]{hkv},
\begin{equation*}
\frac{\mu_{\B^2}(x,0)}{|x|^w}
=\frac{\gamma_2(1/{\rm th}(\rho_{\B^2}(x,0)/2))}{|x|^w}
=\frac{\gamma_2(1/|x|)}{|x|^w}
=\frac{2\pi}{|x|^w\mu(|x|)}\geq\frac{2\pi}{|x|^w U(|x|)},
\end{equation*}
where $U(r)=\log(2(1+\sqrt{1-r^2})/r)$.
Since  $\lim_{r\to0+}U(r)=\infty=\lim_{r\to0+} r^{-w}$
and $(\partial/\partial r) r^{-w}=-wr^{-w-1}>0$
for all $r>0$, it follows from L'H\^opital's rule that
\begin{equation*}
\lim_{r\to0+} r^wU(r)
=\lim_{r\to0+}\frac{U(r)}{r^{-w}}
=\lim_{r\to0+}\frac{U'(r)}{\frac{\partial}{\partial r} r^{-w}}
=\lim_{r\to0+}\frac{-1/(r\sqrt{1-r^2})}{-wr^{-w-1}}
=\lim_{r\to0+}\frac{r^w}{w\sqrt{1-r^2}}
%=\frac{0^w}{w\sqrt{1-0^2}}
=0.
\end{equation*}
Consequently,
\begin{align*}
\lim_{x\to0+}\frac{\mu_{\B^2}(x,0)}{|x|^w}
\geq\lim_{x\to0+}\frac{2\pi}{|x|^wU(|x|)}
=\frac{2\pi}{\lim_{x\to0+}|x|^wU(|x|)}
=\infty.
\end{align*}
Thus, the quotient $\mu_{\B^2}(x,0)/|x|^w$ approaches infinity as $x\to0+$ for all $w>0$, from which our result follows.
\end{proof}
%~~~~~~~~~~~~~~~~~~~~~~~~~~~~~~~~~~
%\begin{remark}
 % It follows from Lemma \ref{lem-not Holder} that the modulus metric is not H\"{o}lder continuous with respect to the hyperbolic metric, because the hyperbolic metric and the Euclidean metric are locally similar.
%\end{remark}
\begin{nonsec}{\bf Proof of Theorem \ref{nonHol}.}
The proof follows from Lemma \ref{lem-not Holder}.
\hfill $\square$
\end{nonsec}
%~~~~~~~~~~~~~~~~~~~~~~~~~~~~~~~~~~
\begin{cor}\label{cor-FerNotHol}
The metric $\lambda^{-1}_{\B^2}(x,y)$ defined in the unit disk is not H\"older continuous with respect to the hyperbolic metric.
\end{cor}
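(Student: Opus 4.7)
The plan is to mirror Lemma \ref{lem-not Holder} almost verbatim, replacing the Euclidean metric with $\rho_{\B^2}$ and $\mu_{\B^2}$ with $\lambda_{\B^2}^{-1}$. Specifically, for an arbitrary $w>0$ I will show that the quotient $\lambda_{\B^2}^{-1}(x,0)/\rho_{\B^2}(x,0)^w$ diverges as $x\to 0^+$ along the real diameter $y=0$, $x\in(0,1)$. This rules out any inequality of the form $\lambda_{\B^2}^{-1}(u,v)\leq L\,\rho_{\B^2}(u,v)^w$ with $L,w>0$, which is exactly the negation of H\"older continuity in the sense used in Lemma \ref{lem-not Holder}.

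For the lower bound on $\lambda^{-1}$, I will use the estimate already recorded in Section \ref{sec3-ul bounds},
\begin{equation*}
\lambda_D(x,y)<c_n\log(2t),\qquad t=\frac{e^{\rho_D(x,y)/2}+1}{e^{\rho_D(x,y)/2}-1},
\end{equation*}
specialized to $D=\B^2$ with $c_2=2/\pi$, which gives $\lambda_{\B^2}^{-1}(x,y)>\pi/(2\log(2t))$. If a self-contained derivation were preferred, one could combine Theorem \ref{thm_muLamB}(2) with \eqref{gam2} to write $\lambda_{\B^2}^{-1}(x,y)=(2/\pi)\mu(1/\cosh(\rho_{\B^2}(x,y)/2))$, apply the Landen-type identity \eqref{gam3} with $s=\cosh(\rho_{\B^2}(x,y)/2)$ (so that $(s-1)/(s+1)=\tanh^2(\rho_{\B^2}(x,y)/4)$), and then bound $\mu$ at the small argument $\tanh^2(\rho_{\B^2}(x,y)/4)$ by the estimate $\mu(r)\leq U(r)=\log(2(1+\sqrt{1-r^2})/r)$ already used in Lemma \ref{lem-not Holder}.

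The final step is an elementary asymptotic analysis as $\rho=\rho_{\B^2}(x,0)\to 0^+$: since $e^{\rho/2}-1\sim\rho/2$ and $e^{\rho/2}+1\to 2$, we have $t\sim 4/\rho$ and $\log(2t)=O(\log(1/\rho))$, so
\begin{equation*}
\frac{\lambda_{\B^2}^{-1}(x,0)}{\rho^w}\;\geq\;\frac{\pi}{2\rho^w\log(2t)}\;\longrightarrow\;\infty\qquad(x\to 0^+),
\end{equation*}
because $\rho^w\log(1/\rho)\to 0$ for every $w>0$, by a one-line L'H\^opital calculation identical to the one at the end of Lemma \ref{lem-not Holder}. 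I do not anticipate a serious obstacle: the main ingredient is already packaged in the paper, and the remaining work is routine bookkeeping. The only point needing care is to fix the convention that H\"older continuity of $\lambda_{\B^2}^{-1}$ with respect to $\rho_{\B^2}$ means the identity map $(\B^2,\rho_{\B^2})\to(\B^2,\lambda_{\B^2}^{-1})$ is H\"older, matching the usage in Lemma \ref{lem-not Holder}, so that exhibiting divergence of the quotient along a single sequence is enough.
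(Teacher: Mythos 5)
Your argument is correct, but it takes a genuinely different and more self-contained route than the paper's. The paper disposes of the corollary in one line by invoking the planar identity $\mu_{\B^2}(x,y)\lambda_{\B^2}(x,y)=4$ from \cite[16.7(1)]{avv}, which turns the claim into the already-established non-H\"older continuity of $\mu_{\B^2}$ with respect to $\rho_{\B^2}$ (Lemma \ref{lem-not Holder} and Theorem \ref{nonHol}), since $\lambda_{\B^2}^{-1}=\mu_{\B^2}/4$. You instead bound $\lambda_{\B^2}^{-1}$ from below directly via the displayed estimate $\lambda_D(x,y)<c_n\log(2t)$ with $t=(e^{\rho/2}+1)/(e^{\rho/2}-1)$ and $c_2=2/\pi$, so that $\lambda_{\B^2}^{-1}(x,0)>\pi/(2\log(2t))$, and then use $t\sim 4/\rho$ to see that this lower bound decays only like $1/\log(1/\rho)$, which beats $\rho^w$ for every $w>0$ since $\rho^w\log(1/\rho)\to 0$. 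The constants check out, and your alternative self-contained derivation via Theorem \ref{thm_muLamB}(2), \eqref{gam2} and \eqref{gam3} (using $(s-1)/(s+1)={\rm th}^2(\rho/4)$ for $s={\rm ch}(\rho/2)$, which indeed gives $\lambda_{\B^2}^{-1}=\pi/\mu({\rm th}^2(\rho/4))$) is also consistent. What your approach buys is independence from the external identity $\mu\lambda=4$, so the argument would survive in settings where only the capacity bounds for $\lambda_D$ in terms of $\gamma_n$ are available; what the paper's approach buys is brevity and the observation that the two non-H\"older statements are literally the same up to the constant factor $4$. Your closing remark about fixing the convention (H\"older continuity of the identity map $(\B^2,\rho_{\B^2})\to(\B^2,\lambda_{\B^2}^{-1})$, refuted by divergence of the quotient along one sequence) is exactly the convention the paper uses in Lemma \ref{lem-not Holder}.
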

\begin{proof}
The result follows from Lemma \ref{lem-not Holder} and the identity $\mu_{\B^2}(x,y)\lambda_{\B^2}(x,y)=4$, see \cite[16.7(1)]{avv}.
\end{proof}
\vspace{0.5cm}
\noindent
{\bf Acknowledgments.}
We are indebted to Prof. Matti Vuorinen for introducing this topic to
us and offering useful suggestions during the writing process. We also wish to extend our appreciation to the reviewers for the priceless feedback they provided.

\vspace{0.5cm}
\noindent
{\bf Funding.}
  The first author was financially supported by the University of Turku Graduate School, Doctoral Programme in Exact Sciences (EXACTUS). The research of the second author was supported by the Magnus Ehrnrooth Foundation.

\subsection*{Conflict of interest.}

The authors declare that they have no conflict of interest.

%%%%%%%%%%%%%%%%%%%%%%%%%%%%%%

\def\cprime{$'$} \def\cprime{$'$} \def\cprime{$'$}
\providecommand{\bysame}{\leavevmode\hbox to3em{\hrulefill}\thinspace}
\providecommand{\MR}{\relax\ifhmode\unskip\space\fi MR }
% \MRhref is called by the amsart/book/proc definition of \MR.
\providecommand{\MRhref}[2]{%
  \href{http://www.ams.org/mathscinet-getitem?mr=#1}{#2}
}
\providecommand{\href}[2]{#2}

\nocite{*}

%========================================================================

%\includepdf[pages=-,pagecommand={},width=\textwidth]{TODOcom20220810.pdf}
%======================================================

\end{document}